\documentclass[12pt]{amsart}
\usepackage[utf8]{inputenc}
\usepackage[T1]{fontenc}
\usepackage{amsfonts}
\usepackage{amsmath,amscd}
\usepackage{amsthm}
\usepackage{latexsym}
\usepackage{amssymb}
\usepackage{enumerate}
\usepackage{url}
\markleft{\hfill G. Raposo\hfill }
\usepackage{hyperref}

\newtheorem{theorem}{Theorem}[section]
\newtheorem*{theorem*}{Theorem}

\newtheorem{definition}[theorem]{Definition}
\newtheorem{example}[theorem]{Example}
\newtheorem{lemma}[theorem]{Lemma}
\newtheorem{remark}[theorem]{Remark}
\newtheorem*{claim*}{Claim}
\usepackage{tikz-cd}

\usepackage{mathdots}

\usepackage{geometry}
\usepackage{tikz}
\usetikzlibrary{arrows,matrix,positioning}

\usepackage{caption}
\usepackage{subcaption}
\usepackage{graphicx}
\usepackage{tikz}
\usetikzlibrary{decorations.pathmorphing} 
\usetikzlibrary{decorations.pathreplacing} 

\numberwithin{equation}{section}

\newcommand{\R}{\mathbb{R}} 
\newcommand{\C}{\mathbb{C}} 
\newcommand{\N}{\mathbb{N}} 
\newcommand{\E}{\mathbb{E}} 

\allowdisplaybreaks

\title{Interpolation of Gaussian Free Fields via Random Matrices}
\author{Gabriel Raposo}
\address{Department of Mathematics, University of Toronto, \newline 40 St. George Street, Toronto, ON M5S2E4, Canada}
\email{gabo.raposo@utoronto.ca}

\begin{document}

\maketitle

\begin{abstract} In this paper we construct a two parameter family of generalized Gaussian fields which interpolates between the Gaussian Free Field, conditioned Gaussian Free Fields, and Gaussian Free Fields with independent noise. Depending on the values of the parameters, we characterize these generalized Gaussian fields as partial conditionings of a Gaussian Free Field with perturbations of its first two Fourier modes. We show that this family arises as the fluctuations of the height function associated with the corner process of a two parameter variant of Wigner random matrices.
\end{abstract}

\tableofcontents

\section{Introduction}
\subsection{Overview}
The Gaussian Free Field has been a central object in the asymptotic study of various statistical mechanics models. Classical examples include dimer models, interacting particle systems, and six vertex models, among many others \cite{Ke09,BF,DKKMO20,Gorin,GN25,DKLM26}. In recent years, various relatives of the Gaussian Free Field have appeared as scaling limits, such as correlated Gaussian Free Fields \cite{Bo,BB14}, Gaussian Free Field with Brownian motion \cite{BPZ25}, Gaussian Free Field with a discrete component \cite{BN25} and conditioned Gaussian Free Fields \cite{RaMain}. 

The goal of this paper is to construct via random matrix methods a two parameter family of Gaussian fields interpolating between the Gaussian Free Field, the conditioned GFF of \cite{RaMain} and the GFF with noise of \cite{BPZ25}, while also producing a new higher-conditioned GFF. The two parameters arise from the second moment of the diagonal entries and the fourth moment of the off-diagonal entries of a Wigner matrix. In the limit these correspond to the first two Fourier modes of the resulting Gaussian field. This suggests that random matrix models provide a mechanism to construct new relatives of the Gaussian Free Field.

Note that the appearance of the Gaussian Free Field in the description of global fluctuations of the height function associated to the corner process of random matrices has been extensively studied during the last decade, see \cite{Bo,BoG,DP18,LRS20,KZ23}. To ease the notation, we present the definitions corresponding to complex Hermitian ($\beta=2$) random matrices. However, the definitions can be immediately extended to real symmetric ($\beta=1$) and quaternionic self-adjoint ($\beta=4$) random matrices (Corresponding to unitary, orthogonal, and symplectic ensembles, respectively).

\begin{definition}\label{DefinitionModel}
Let $n\in \N$ and let $\big(X_{i,j}\big)_{1\leq i<j\leq n}$ be a collection of i.i.d. complex ($\beta=2$) random variables and $\big(X_{\ell,\ell}\big)_{1\leq \ell\leq n}$ be an independent i.i.d collection of real random variables such that $\E|X_{i,j}|^k<\infty$, $\E\big[X_{i,j}\big]=0$  for any $1\leq i\leq j \leq n$ and $k\geq 1$, $\E\big[X_{1,2}^2\big]=0$ and $\E\big[|X_{1,2}|^2\big]=1$. We further fix $\E\big[X_{1,1}^2\big]=a$ and $\E\big[|X_{1,2}|^4\big]=b$. Finally, for $i<j$ fix $X_{j,i}:=\bar{X}_{i,j}$. We say that $M$ is an $(a,b)$-Wigner random matrix if
\begin{equation*}M:=\begin{bmatrix}
    X_{1,1} & X_{1,2} & X_{1,3}  & \cdots & X_{1,n}\\
    X_{2,1} & X_{2,2} & X_{2,3} & \cdots & X_{2,n}\\
    X_{3,1} & X_{3,2} & X_{3,3} & \cdots & X_{3,n}\\
    \vdots & \vdots & \vdots & \ddots  \\
    X_{n,1} & X_{n,2}& X_{n,3} & \cdots & X_{n,n} 
\end{bmatrix}.
\end{equation*}
\end{definition}

\begin{example}\label{ExampleOfDefinitions}
One can choose $M$ to be a GUE random matrix, in which case we have $a=1$ and $b=2$. Another choice is to have $X_{i,j}$ to be uniformly distributed on the unit circle when $i\neq j$ and $0$ when $i=j$, in which case we have $a=0$ and $b=1$. In the real symmetric case, this corresponds to GOE, and to take $X_{i,j}$ to be uniformly distributed on $\{-1,1\}$ when $i\neq j$ and $0$ when $i=j$, respectively.
\end{example}

\begin{figure}[h]
    \centering
    \subfloat[\centering Random height function.]{{\includegraphics[scale=0.45]{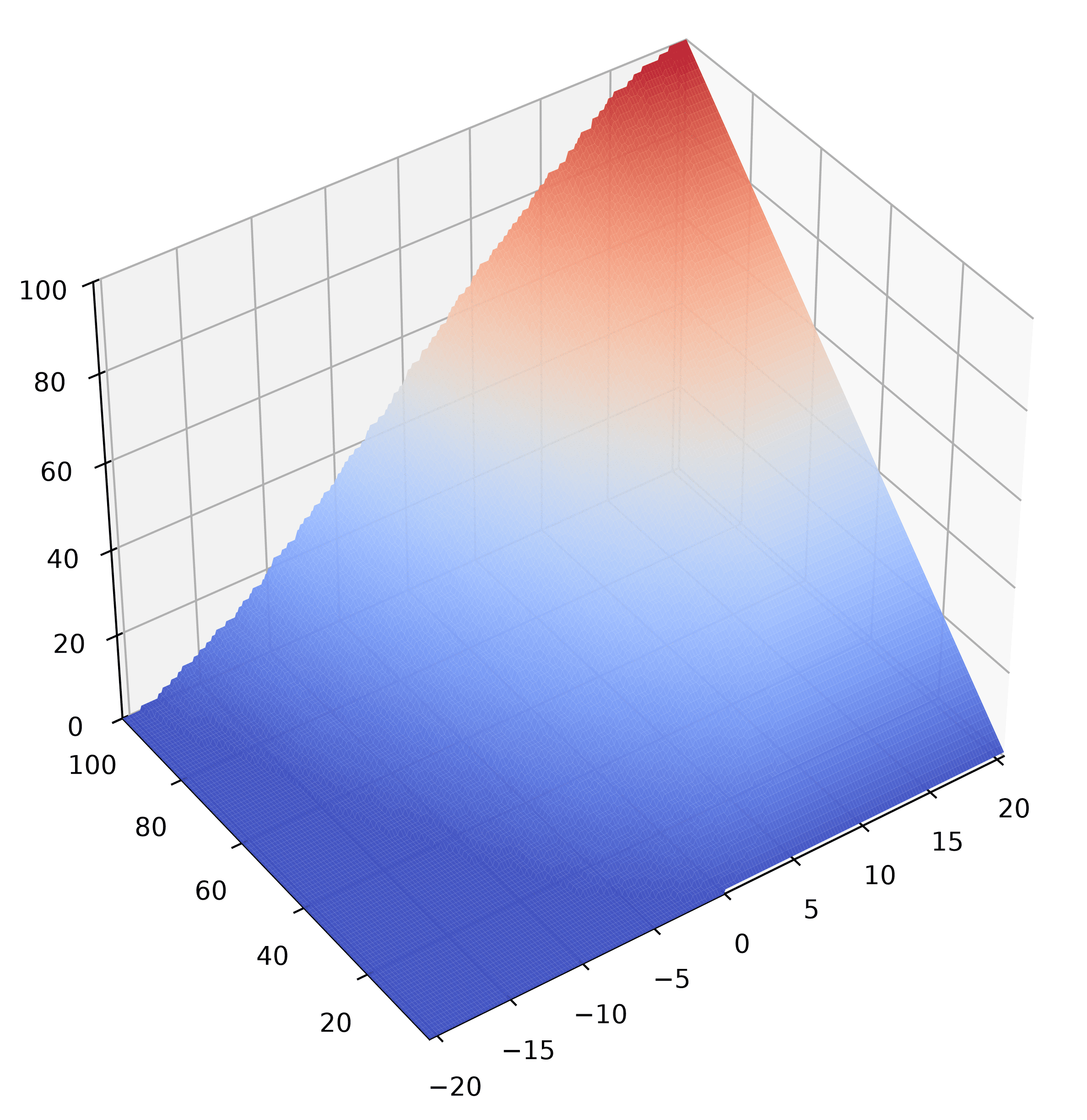} }}%
    \qquad
    \subfloat[\centering Fluctuation surface.]{{\includegraphics[scale=0.45]{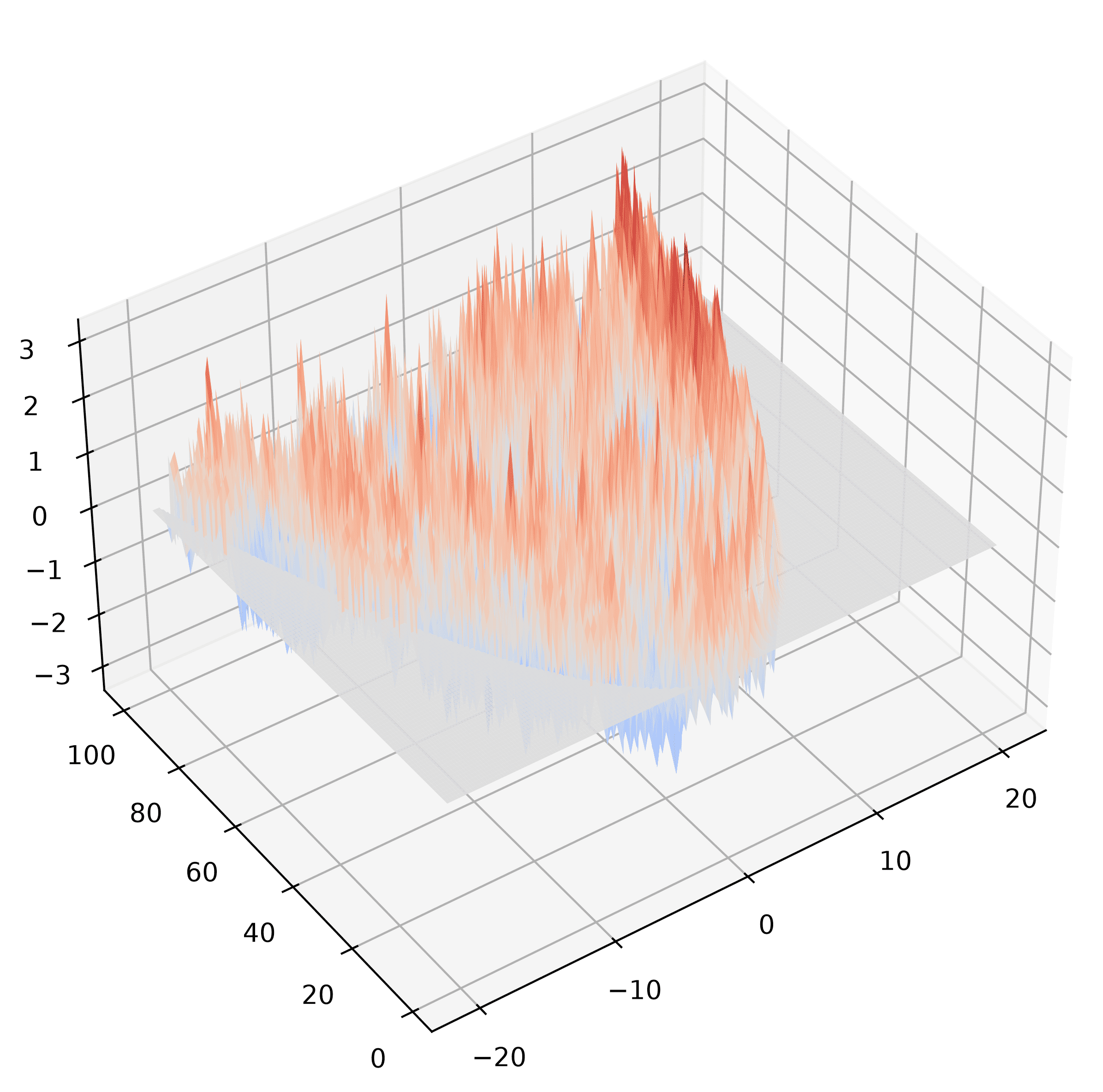} }}%
    \caption{Height function $\textup{H}$ and fluctuations $\sqrt{\pi}\big(\textup{H}-\mathbb{E}\textup{H}\big)$ for a $100\times 100$ random matrix with $0$ diagonal and uniform on the unit circle random entries.}%
\label{Figure1}%
\end{figure}

Note that for any $(a,b)$-Wigner random matrix its empirical measure converges to the semicircle distribution, this is a standard fact that can be found in \cite[Chapter 1]{AGZ}.

\begin{lemma}\label{TheoSemiCircleDist}
Let $M$ be an $(a,b)$-Wigner random matrix with eigenvalues $\lambda_1,\dots,\lambda_n$. Then its empirical spectral distribution $\frac{1}{n}\sum_{i=1}^n \delta_{\lambda_i/\sqrt{n}}$ weakly converges in probability to the semicircle distribution $\frac{1}{2\pi}\sqrt{4-x^2} \,\mathbf{1}_{[-2,2]}dx$.
\end{lemma}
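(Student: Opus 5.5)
We follow the standard moment method, in the form found in \cite[Chapter 1]{AGZ}. Set $W=M/\sqrt{n}$ and $L_n=\frac1n\sum_{i}\delta_{\lambda_i/\sqrt n}$. The semicircle law $\sigma$ is compactly supported, so it is determined by its moments. These are $m_{2k}=C_k$, the Catalan numbers, and $m_{2k+1}=0$. Hence it suffices to establish two facts for every fixed $k$:
\[
\E\Big[\int x^k\,dL_n\Big]=\tfrac1n\E\operatorname{tr}W^k\to m_k,
\qquad
\operatorname{Var}\Big(\tfrac1n\operatorname{tr}W^k\Big)\to 0 .
\]
Once these hold, Chebyshev gives $\int x^k dL_n\to m_k$ in probability for each $k$. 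Moment convergence to a compactly supported limit then yields weak convergence in probability. This step is standard: approximate a bounded continuous $f$ by polynomials on $[-3,3]$, and control the mass outside $[-3,3]$ using $\int x^{2k}dL_n$ together with Markov's inequality.

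For the expectation we expand
\[
\tfrac1n\E\operatorname{tr}W^k=n^{-1-k/2}\sum_{i_1,\dots,i_k}\E[X_{i_1i_2}X_{i_2i_3}\cdots X_{i_ki_1}]
\]
and group the index sequences into closed walks on the complete graph with loops. Since the entries are centered, every undirected edge must be traversed at least twice. Let the walk visit $t$ distinct vertices. Its underlying graph is connected with at most $k/2$ distinct edges, so $t\le k/2+1$. Equality forces $k$ even, forces the graph to be a tree, and forces every edge to be traversed exactly twice, once in each direction; in particular there are no loops. For such walks the expectation equals $\prod\E|X_{ij}|^2=1$. Here we use the assumption $\E X_{12}^2=0$ (so a pairing of $X_{ij}$ with $X_{ij}$ contributes nothing) and the Hermitian relation $X_{ji}=\bar X_{ij}$. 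These walks are counted by the $C_{k/2}$ Dyck paths times $n(n-1)\cdots(n-k/2)\sim n^{k/2+1}$ choices of labels. The contribution is therefore $C_{k/2}+o(1)$.

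All remaining walks have $t\le (k+1)/2$. Their number is at most $c_k n^{t}$, and each expectation is bounded by a constant depending on $k$ by Hölder's inequality and the finite moment assumption $\E|X_{ij}|^k<\infty$. Their total contribution is thus $O(n^{(k+1)/2-1-k/2})=O(n^{-1/2})$. Observe that the parameters $a=\E X_{11}^2$ and $b=\E|X_{12}|^4$ enter only through these subleading terms, which is why the limit does not depend on them.

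For the variance we write $\operatorname{Var}$ as $n^{-2-k}$ times a sum over pairs of closed walks $(w_1,w_2)$ of
\[
\E[X_{w_1}X_{w_2}]-\E[X_{w_1}]\E[X_{w_2}].
\]
Such a term vanishes unless the two walks share an edge, and unless every edge of the union is covered at least twice. The union is then connected with at most $k$ distinct edges, so it has at most $k+1$ vertices, giving a bound of $O(n^{k+1}\cdot n^{-2-k})=O(1/n)$.

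The main obstacle is the combinatorial bookkeeping in this last step and in the subleading count, namely checking that no configuration reaches $k+2$ vertices. This follows from the connectivity of the union graph: with at most $k$ distinct edges it has at most $k+1$ vertices, and the $O(1)$ moment bounds hold uniformly because all moments are finite. The real symmetric and quaternionic cases go through with only changes in the constants and in the leading pairing count; in the real case $\E X_{12}^2=1$ and the pairings still produce $C_{k/2}$.
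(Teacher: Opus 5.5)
Your proposal is correct and follows the standard moment-method argument of \cite[Chapter 1]{AGZ}: the expectation is computed via tree walks counted by Catalan numbers, and the variance is bounded via connectivity of the union graph. This is exactly the argument the paper invokes by citation instead of writing out a proof. One minor remark: the hypothesis $\E[X_{1,2}^2]=0$ is never actually used, since a closed walk on a tree crosses each edge once in each direction, so only factors $\E|X_{i,j}|^2=1$ appear.
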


We now introduce the language needed to state our main results. For each $n\times n$ random matrix $M$ and any integer $n'<n$, denote $M_{n'}$ the $n'\times n'$ upper left corner of $M$. We are interested in the height function $\textup{H}$ associated with a random matrix $M$ defined by 
\begin{equation*}
    \textup{H}(x,y):=\#\{\textup{Eigenvalues of }M_{\lfloor y \rfloor} \textup{ that are less than or equal to } x\}.
\end{equation*}
See Figure \ref{Figure1} for a plot of a random surface for a $(0,1)$-Wigner random matrix. 

The fluctuations of this height function are described by a relative of the Gaussian Free Field. See  \cite{Sh07,PW} for an introduction to the Gaussian Free Field. Here we consider the generalized Gaussian field $\mathfrak{C}_{a,b}$ on the upper-half plane $\mathbb{H}:=\{z\in \C:\Im(z)>0\}$ with covariance kernel
\begin{equation*}
\frac{-1}{2\pi}\ln\Biggr| \frac{z-w}{z-\bar{w}}\Biggr|
-\frac{(1-a)\min\big(|z|^2,|w|^2\big)}{\pi}\Im\Big(\frac{1}{z}\Big)\Im\Big(\frac{1}{w}\Big)-\frac{(2-b)\min\big(|z|^4,|w|^4\big)}{2\pi}\Im\Big(\frac{1}{z^2}\Big)\Im\Big(\frac{1}{w^2}\Big).
\end{equation*}

Interestingly, the covariance kernel depends on the pair $(a,b)$. See Figure \ref{Figure2} for a sample of the limiting Gaussian field $\mathfrak{C}_{0,1}$ in the upper-half plane for an $800\times 800$ $(0,1)$-Wigner random matrix. See Figure \ref{DiagramOfBehaviors} for a diagram of the different Gaussian Free Field relatives the Gaussian field $\mathfrak{C}_{a,b}$ corresponds to, Table \ref{TableOfBehaviors} for a summary and Theorem \ref{Theorem2} for a precise description. 

\begin{figure}[h]
    \centering{{\includegraphics[scale=0.66]{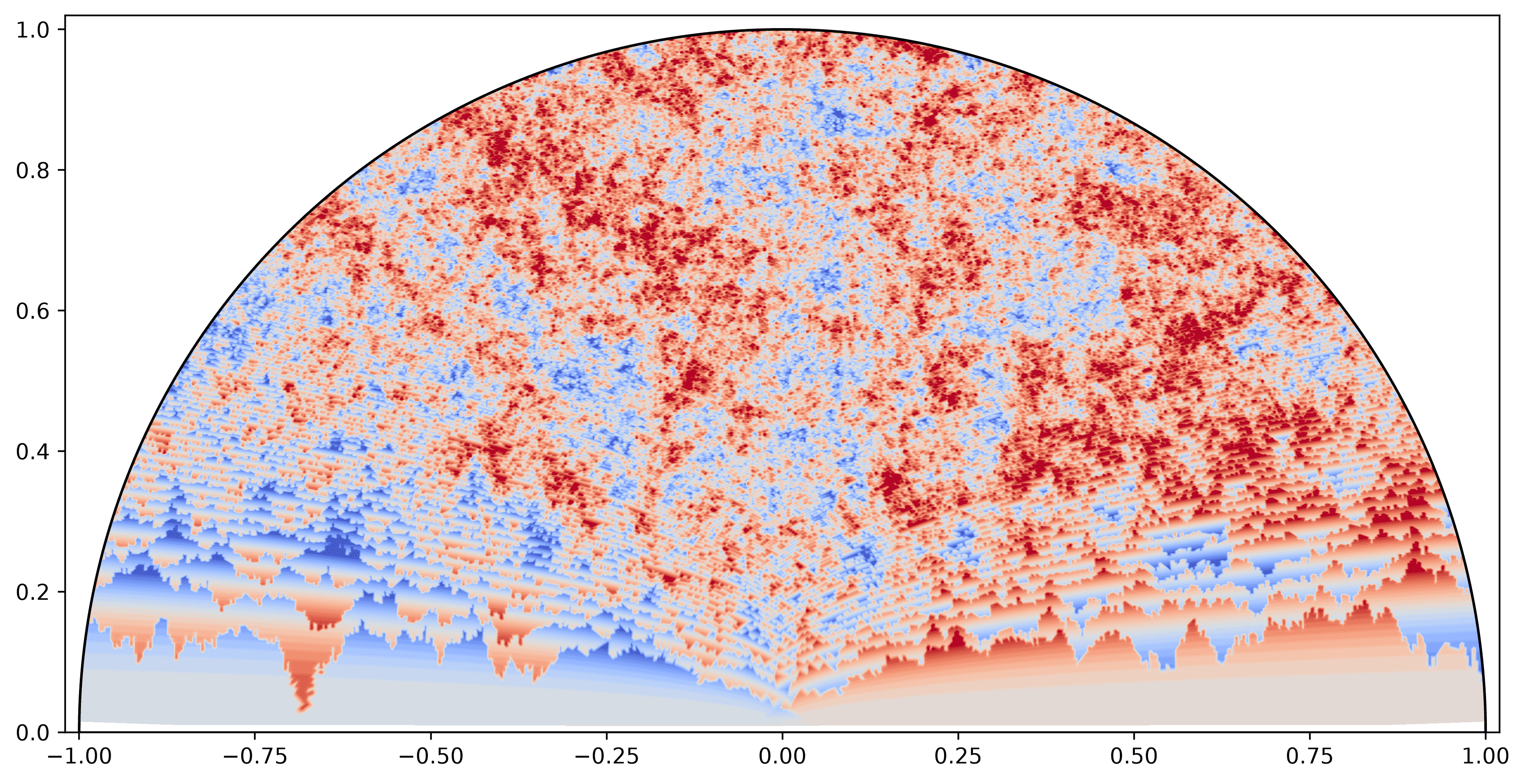}}}%
    \caption{Fluctuations for a $(0,1)$-Wigner random matrix of size $800\times 800$ in the upper-half plane after rescaling.}%
\label{Figure2}%
\end{figure}

We identify the interior of the parabola with the upper-half plane $\mathbb{H}$ by means of the map $\Omega(x,t)=\frac{x}{2}+i\sqrt{t-\frac{x^2}{4}}$. Then $\mathfrak{C}_{a,b}(x,t)$ denotes the pullback of the field $\mathfrak{C}_{a,b}$ on $\mathbb{H}$ under $\Omega$.

\begin{theorem}\label{TheoremModel}
Let $M$ be as in Definition \ref{DefinitionModel} and $\textup{H}$ be the associated height function, then
    \begin{equation*}
    \sqrt{\pi}\bigg[\textup{H}(\sqrt{n}x,nt)-\E\textup{H}(\sqrt{n}x,nt)\bigg]\to \mathfrak{C}_{a,b}(x,t), \hspace{1mm} \text{ as }  \hspace{1mm} n\to \infty.
    \end{equation*}
That is, as $n \to \infty$, the collection of random variables 
\begin{equation*}
 \sqrt{\pi}\int_{-\infty}^{+\infty} x^k \bigg[\textup{H}(\sqrt{n}x,nt)-\E\textup{H}(\sqrt{n}x,nt)\bigg]\,dx
\end{equation*}
converges, in the sense of moments, to 
\begin{equation*}
\int\limits_{\substack{z\in \mathbb{H},\, |z|^2=t}} x(z)^k \mathfrak{C}_{a,b}(z)\frac{dx(z)}{dz}\,dz, \hspace{5mm}\textup{where }\hspace{1mm} x(z)=2\Re(z).
\end{equation*}
\end{theorem}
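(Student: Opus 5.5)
The plan is to reduce the statement to a central limit theorem for joint traces of corners, computed by moment methods. Integrating by parts in $x$ gives, for each $k$,
\begin{equation*}
\int_{-\infty}^{+\infty} x^k \Big[\textup{H}(\sqrt{n}x,nt)-\E\textup{H}(\sqrt{n}x,nt)\Big]\,dx = -\frac{1}{k+1}\, n^{-(k+1)/2}\Big(\textup{Tr}\, M_{\lfloor nt\rfloor}^{k+1}-\E\,\textup{Tr}\, M_{\lfloor nt\rfloor}^{k+1}\Big).
\end{equation*}
The boundary terms vanish because $\textup{H}(\cdot,nt)$ is constant outside the spectrum. It therefore suffices to prove that the centered, rescaled traces $p_k(t):=n^{-k/2}\big(\textup{Tr} M_{\lfloor nt\rfloor}^{k}-\E\,\textup{Tr} M_{\lfloor nt\rfloor}^{k}\big)$, over finitely many levels $t_1\le\dots\le t_m$ and powers $k_1,\dots,k_m$, converge jointly in moments to a Gaussian vector. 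We then identify its covariance with that of the pairings of $\mathfrak{C}_{a,b}$ against $x^k\,dx$ on the arcs $|z|^2=t$.

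For Gaussianity, expand $\E\prod_j p_{k_j}(t_j)$ as a sum over closed walks $i_0\to i_1\to\dots\to i_{k_j}=i_0$ with indices in $\{1,\dots,\lfloor nt_j\rfloor\}$. This is the standard Wigner-type CLT combinatorics, as in the proof of Lemma \ref{TheoSemiCircleDist} and its fluctuation refinements. Centering forces each walk to share an edge with another walk. Counting vertices shows that the leading order $n^0$ comes only from configurations in which walks are matched in pairs, and every higher joint cumulant is $o(1)$. Mixed moments thus satisfy Wick's formula in the limit, so only the covariance $\lim \textup{Cov}(p_k(s),p_l(t))$ with $s\le t$ remains to compute.

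For the covariance, the leading pairs of walks glue along edges into genus-zero structures, and the contributions split into three classes:
\begin{enumerate}[(i)]
\item gluings where each shared edge is traversed once in each walk, using only $\E|X_{12}|^2=1$. These give the Gaussian-entry (GUE-like) covariance.
\item gluings where the two walks share a single diagonal loop $i\to i$, weighted by $\E X_{11}^2=a$.
\item gluings where an edge is traversed twice in each walk, weighted by $\E|X_{12}|^4=b$.
\end{enumerate}
In the GUE case $a=1,b=2$, the total is known (Borodin's result \cite{Bo}) to equal the GFF covariance $-\frac{1}{2\pi}\ln|\frac{z-w}{z-\bar w}|$ integrated against $x(z)^k x(w)^l$. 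Writing the general $(a,b)$ covariance as the GUE covariance plus $(a-1)$ times the diagonal contribution plus $(b-2)$ times the fourth-moment contribution isolates the corrections. The diagonal contribution factorizes: it is proportional to the number of shared indices, which is $\min(s,t)n$, times $\tfrac{k}{2}\,\tau_{k-1}(s)$ times $\tfrac{l}{2}\,\tau_{l-1}(t)$, where $\tau$ are semicircle moments with variance $s,t$. The analogous fourth-moment term is proportional to $\min(s,t)^2$ times the product of second-order semicircle moments. Each is therefore a rank-one kernel in $(k,l)$ with weight $\min(s,t)^{j}$, $j=1,2$. This matches the shape $\min(|z|^2,|w|^2)^j\Im(z^{-j})\Im(w^{-j})$, since $|z|^2=t$ on the arc.

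The last step is to verify that $\int_{|z|^2=t} x(z)^k\,\Im(z^{-j})\,dx(z)$ reproduces these semicircle moment expressions with the constants $\frac{1}{\pi}$ and $\frac{1}{2\pi}$. This is a direct contour computation using $x=z+t/z$ on the arc, which extracts the coefficient of $z^{\pm j}$ in $(z+t/z)^k$. I expect the main obstacle to be the combinatorial bookkeeping in the covariance. One must show that, across different corners, exactly these two correction types survive at order $n^0$ and that their multiplicities are precisely $\min(s,t)n$ and $\min(s,t)^2n^2$ times the claimed rank-one factors. Getting the exact constants, including the factor for a doubly traversed edge in class (iii), requires care. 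I would first check the constants against the $k=l=1,2$ cases directly before doing the general count.
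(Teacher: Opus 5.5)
Your proposal is correct and follows essentially the same route as the paper. You integrate by parts to reduce to centered traces of corners, invoke the moment-method CLT of \cite{AGZ,Bo}, and write the multilevel covariance as Borodin's GFF part plus the rank-one corrections $(a-1)\binom{k}{\frac{k-1}{2}}\binom{l}{\frac{l-1}{2}}\min(s,t)\,s^{\frac{k-1}{2}}t^{\frac{l-1}{2}}$ and $2(b-2)\binom{k}{\frac{k-2}{2}}\binom{l}{\frac{l-2}{2}}\min(s,t)^2 s^{\frac{k-2}{2}}t^{\frac{l-2}{2}}$, matched to the kernel by a contour computation. On the constants you flagged: the doubly-traversed-edge class carries covariance weight $\E|X_{12}|^4-1=b-1$ rather than $b$, the factor $2$ comes from the two orientations of the shared edge, and the diagonal factor is $k\,\tau_{k-1}(s)=\binom{k}{\frac{k-1}{2}}s^{\frac{k-1}{2}}$ rather than $\tfrac{k}{2}\tau_{k-1}(s)$.
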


Note that when $a=1$ and $b=2$ we recover the standard Gaussian Free Field. If $a=0$ and $b=2$ we recover the conditioned GFF from \cite{RaMain}, if $a=2$ and $b=2$ we recover a GFF perturbed by a Brownian motion \cite{BPZ25} and if $b=1$ with $a=0$ or $a=1$ we obtain two instances of a new higher conditioned GFF (CGFF). It is only natural to ask if there is an appropriate description for the generalized Gaussian field $\mathfrak{C}_{a,b}$, the answer is in the affirmative. Indeed, $\mathfrak{C}_{a,b}$ corresponds to a Gaussian Free Field with a perturbation to the lower Fourier modes.  

\begin{figure}[h]
\centering
\vspace{-3mm}
\begin{tikzpicture}[scale=1.5]

\draw[->] (0,0) -- (6,0) node[right] {$a$};
\draw[->] (0,0) -- (0,6) node[above] {$b$};

\draw[dashed] (3,0) -- (3,6);
\draw[dashed] (0,3) -- (6,3);

\draw (0,0.08) -- (0,-0.08) node[below, , yshift=-1mm] {$0$};
\draw (3,0.08) -- (3,-0.08) node[below, yshift=-1mm] {$1$};
\draw (0.08,0) -- (-0.08,0) node[left, xshift=-1mm] {$1$};
\draw (0.08,3) -- (-0.08,3) node[left, xshift=-1mm] {$2$};

\fill (3,3) circle (2pt);
\node[above right] at (3,3) {GFF};

\fill (0,0) circle (2pt);
\node[above right] at (0,0) {CGFF};

\fill (3,0) circle (2pt);
\node[above right] at (3,0) {CGFF};

\fill (0,3) circle (2pt);
\node[above right] at (0,3) {CGFF};

\node[align=center] at (4.5,4.5)
{GFF with\\Gaussian noise};

\node[align=center] at (1.5,1.5)
{Partially\\Conditioned GFF};

\node[align=center] at (1.5,4.5)
{Partially\\ Conditioned GFF with\\Gaussian noise};

\node[align=center] at (4.5,1.5)
{Partially \\ Conditioned GFF with\\Gaussian noise};

\end{tikzpicture}
\caption{Diagram of behaviors for the Gaussian field $\mathfrak{C}_{a,b}$.}\label{DiagramOfBehaviors}
\end{figure}

Informally, for a $(a,b)$-Wigner random matrix, one can write down a formal Fourier expansion for the induced generalized Gaussian field. Denoting $t=|z|^2$ and $z=\sqrt{t}e^{i\theta}$ for $t> 0$ and $0<\theta<\pi$ we have the following
\begin{equation*}
\mathfrak{C}_{a,b}(z)=-\sqrt{\frac{a}{\pi t }} B_1\big(t\big) \sin(\theta)-\sqrt{\frac{b-1}{2\pi t^2}} B_2\big(t^2\big)\sin(2\theta)-\sum_{j=3}^\infty \frac{1}{\sqrt{\pi j t^j}} B_j\big(t^j\big)\sin(j\theta).
\end{equation*}
Here $\big(B_j(\cdot)\big)_{j=1}^\infty$ is a collection of independent Brownian motions. The standard Gaussian Free Field corresponds to the case $a=1$ and $b=2$. Hence, in our case we obtained a formal Fourier expansion of a Gaussian Free Field with first Fourier mode rescaled by $\sqrt{a}$ and second Fourier mode rescaled by $\sqrt{b-1}$. 

Using this heuristic we can recover a more precise mathematical description. Indeed, for $a>1$ or $b>2$ the resulting covariance can be recovered by adding independent noise to the first mode or the second mode of the GFF. More generally, note that for any Gaussian random variable $X$ with variance $\sigma^2$ and independent Gaussian noise $\eta$ with variance $p\sigma^2$ fixing $Y=X+\eta$ and conditioning on $Y=0$, we get $\textup{Var}\big(X|Y=0\big)=\frac{p}{p+1}\sigma^2$. Hence, for $a<1$ or $b<2$, the Gaussian field covariance can be recovered by adding independent noise to each mode and then conditioning on the mode to be $0$.

\begin{definition}
We will say that a Gaussian field $\mathfrak{C}$ is conditioned to have first mode $0$, if it is conditioned so that $\int_{0}^\pi \mathfrak{C}(\sqrt{t}e^{i\theta}) \sin(\theta) d\theta=0$ for any $t>0$. Similarly, we will say that we are conditioning to have second mode $0$ if it is conditioned so that $\int_{0}^\pi \mathfrak{C}(\sqrt{t}e^{i\theta}) \sin(2\theta) d\theta=0$ for any $t>0$.
\end{definition}

Putting everything together, there are two critical values. For the first mode, one of the critical values $a=0$ induces a pure conditioning to the GFF, when $0<a<1$ we have a partial conditioning, that is we condition a GFF after adding an independent Gaussian noise. For the second critical value $a=1$ we recover the GFF, and finally for $a>1$ we have a GFF with an independent Gaussian noise. The same happens with the second mode, where the critical values are $b=1$ and $b=2$. See Figure \ref{DiagramOfBehaviors} and Table \ref{TableOfBehaviors}. 

\begin{theorem}\label{Theorem2}
Let $\mathfrak{C}_{a,b}$ be the generalized Gaussian field obtained from Theorem \ref{TheoremModel}. Fix a standard Gaussian Free field $\mathfrak{G}$ in the upper-half plane, $\eta_1(z)=\tfrac{-1}{\sqrt{\pi t}} B_1(t) \sin(\theta)$ and $\eta_2(z)=\tfrac{-1}{\sqrt{2 \pi t^2}} B_2(t^2) \sin(2\theta)$ for $B_1$ and $B_2$ two independent Brownian motions that are also independent of $\mathfrak{G}$. Then $\mathfrak{C}_{a,b}$ can be characterized as follows.
\begin{enumerate}
    \item If $(a,b)=(1,2)$, $\mathfrak{C}_{a,b}$ corresponds to a standard Gaussian Free Field $\mathfrak{G}$.
    \item If $(a,b)=(0,1)$, $\mathfrak{C}_{a,b}$ corresponds to $\mathfrak{G}$ conditioned to have first and second modes $0$. If $(a,b)=(0,2)$, $\mathfrak{C}_{a,b}$ corresponds to $\mathfrak{G}$ conditioned to have first mode $0$. If $(a,b)=(1,1)$, $\mathfrak{C}_{a,b}$ corresponds to $\mathfrak{G}$ conditioned to have second mode $0$.
    \item If $a\geq1$ and $b\geq 2$, $\mathfrak{C}_{a,b}$ corresponds to $\mathfrak{G}+\sqrt{a-1}\eta_1+\sqrt{b-2}\eta_2$.
    \item If $0\leq a <1$ and $1\leq b <2$, $\mathfrak{C}_{a,b}$ corresponds to $\mathfrak{G}$ conditioned so that $\mathfrak{G}+\sqrt{\frac{a}{1-a}}\eta_1+\sqrt{\frac{b-1}{2-b}}\eta_2$ has first and second modes $0$. 
    \item  If $0\leq a <1$ and $b\geq 2$, $\mathfrak{C}_{a,b}$ corresponds to $\mathfrak{G}+\sqrt{b-2}\eta_2$ conditioned so that $\mathfrak{G}+\sqrt{\frac{a}{1-a}}\eta_1$ has first mode $0$. If $a\geq1$ and $1\leq b<2$, $\mathfrak{C}_{a,b}$ corresponds to $\mathfrak{G}+\sqrt{a-1}\eta_1$ conditioned so that $\mathfrak{G}+\sqrt{\frac{b-1}{2-b}}\eta_2$ has second mode $0$. 
\end{enumerate}
\end{theorem}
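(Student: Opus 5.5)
The plan is to verify every case by a single covariance computation. Write the GFF kernel $G(z,w)=\frac{-1}{2\pi}\ln\big|\frac{z-w}{z-\bar w}\big|$ in polar coordinates $z=\sqrt{t}e^{i\theta}$, $w=\sqrt{s}e^{i\phi}$. For $|w|<|z|$ the standard expansion is
\begin{equation*}
G(z,w)=\sum_{j\geq1}\frac{1}{\pi j}\Big(\frac{\min(t,s)}{\max(t,s)}\Big)^{j/2}\sin(j\theta)\sin(j\phi).
\end{equation*}
Its $j$-th term is $\frac{\min(t^j,s^j)}{\pi j\, t^{j/2}s^{j/2}}\sin(j\theta)\sin(j\phi)$, which is exactly $\textup{Cov}$ of $\frac{-1}{\sqrt{\pi j t^j}}B_j(t^j)\sin(j\theta)$ at the two points. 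For the first two modes we have $\Im(1/z)=-\sin\theta/\sqrt t$ and $\Im(1/z^2)=-\sin 2\theta/t$. The two correction terms of the kernel of $\mathfrak{C}_{a,b}$ are therefore $-(1-a)$ times the $j=1$ term and $-(2-b)$ times the $j=2$ term. In other words, $\mathfrak{C}_{a,b}$ has the same covariance as the GFF except that the first and second modal covariances are multiplied by $a$ and $b-1$. This also makes the formal Fourier expansion in the introduction rigorous at the covariance level.

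Next I decompose $\mathfrak{G}=\mathfrak{G}_1+\mathfrak{G}_2+\mathfrak{G}_{\geq3}$ into independent modal pieces. Here $\mathfrak{G}_j(z)=\frac{-1}{\sqrt{\pi j t^j}}W_j(t^j)\sin(j\theta)$ with independent Brownian motions $W_j$, all independent of $B_1,B_2$. These pieces are Gaussian with the covariances above, so their sum has the GFF covariance and can be taken as $\mathfrak{G}$. The noises satisfy $\eta_1\stackrel{d}{=}\mathfrak{G}_1$ and $\eta_2\stackrel{d}{=}\mathfrak{G}_2$ as processes.

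Conditioning "first mode $0$" means conditioning on $\{W_1(t)+\ldots\}$ for all $t$. Since $\int_0^\pi\sin(j\theta)\sin(\theta)\,d\theta=0$ for $j\neq1$, this conditions only on the Brownian motion appearing in mode one. I make it rigorous as conditioning a Gaussian vector on a closed linear subspace, i.e. replacing each Gaussian variable by its residual after orthogonal projection. I test against smooth compactly supported functions, so only finitely dimensional marginals matter.

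The cases then reduce to one-dimensional computations on each of the two low modes.

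Case (1) is immediate.

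Case (3): adding $\sqrt{a-1}\eta_1$ replaces the mode-one Brownian covariance $\min(t,s)$ by $a\min(t,s)$, and likewise mode two gets factor $1+(b-2)=b-1$.

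Case (2): conditioning $W_1\equiv0$ kills mode one, giving factor $0=a$. Conditioning $W_2\equiv 0$ gives factor $0=b-1$.

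Case (4): condition on $W_1+\sqrt{p}B_1\equiv 0$ with $p=\frac{a}{1-a}$. Because $W_1$ and $B_1$ are independent with proportional covariance, the conditional law of $W_1$ is that of $W_1-\frac{1}{1+p}(W_1+\sqrt pB_1)$. This is a process with covariance $\frac{p}{1+p}\min(t,s)=a\min(t,s)$. The process-level version of the scalar identity stated before the theorem holds precisely because the covariance kernels are proportional, so the projection coefficient $\frac1{1+p}$ is a scalar. Similarly, with $q=\frac{b-1}{2-b}$ the second mode gets factor $\frac{q}{1+q}=b-1$. The modes $\geq3$ are untouched in every case.

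Case (5) mixes the two mechanisms on the two independent modes. In the first statement, $\sqrt{b-2}\eta_2$ is independent of the conditioning event, which involves mode one only, so the factors are $a$ and $b-1$. The second statement is symmetric.

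The main obstacle is the rigorous meaning of conditioning a generalized field on the uncountable family of events $\int\mathfrak{C}\sin(\theta)\,d\theta=0$ for all $t$. I would handle it as above, via Gaussian Hilbert space projection. I would then check that the projection of mode one onto the closed span of $\{W_1(t)+\sqrt pB_1(t)\}_t$ is $\frac1{1+p}(W_1+\sqrt pB_1)$, which holds since the residual is uncorrelated with every $W_1(s)+\sqrt pB_1(s)$. Justifying the interchange of the modal series with pairings against test functions uses the summability of $\frac1j(\min/\max)^{j/2}$ off the diagonal and the integrability of the log kernel.
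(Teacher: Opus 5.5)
Your proposal is correct and follows essentially the paper's approach. You diagonalize the GFF covariance mode by mode, through the explicit Fourier series of $\ln\bigl|\frac{z-w}{z-\bar w}\bigr|$ where the paper uses Chebyshev orthogonality, which it calls equivalent, and then add independent noise or project onto the first two modes one mode at a time, as in the paper's treatment of items (2)--(5). Your remark that the projection coefficient $\frac{1}{1+p}$ is a scalar, because $W_1$ and $\sqrt{p}B_1$ have proportional covariance kernels, justifies the process-level step the paper takes directly from the one-variable identity. Also, with the paper's test class (polynomials of degree $k$ in $x(z)$ integrated over $\mathcal{C}_t$), each pairing involves only the modes $\sin(j\theta)$ with $j\le k+1$, so the series-interchange issue you flag as the main obstacle does not arise.
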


Note that the case $(a,b)=(0,2)$ of Theorem \ref{Theorem2} is already present in \cite[Proposition 3.5]{RaMain}. Additionally, case $(a,b)=(2,2)$ of Theorem \ref{Theorem2} is present in \cite[Section 4.2]{BPZ25}. Interestingly, both Gaussian Free Field relatives arise naturally from combinatorial models.

\begin{remark}
Note that the critical values are $\beta$-dependent. Indeed, if rather than Hermitian random matrices ($\beta=2$), we consider real symmetric ($\beta=1$) or quaternionic self-adjoint ($\beta=4$), then the critical values for the first mode are $a=0$ and $a=2/\beta$ while the critical values for the second mode are $b=1$ and $b=1+2/\beta$.
\end{remark}

The proof of Theorem \ref{TheoremModel} is based on the moment method and adapts the ideas outlined in \cite{AGZ,Bo}. We review this set of tools at the beginning of Section \ref{section2}. The proof of Theorem \ref{Theorem2} is based on a diagonalization of the Gaussian Free Field covariance in terms of Chebyshev polynomials of the first kind, equivalently its Fourier expansion. We apply this technique to each one of the cases of the theorem to conclude. 

\begin{table}[h]
    \centering
\begin{tabular}{ |c|c| } 
 \hline
 $(a,b)$ & GFF relative  \\ 
 \hline 
 $(1,2)$ & Standard GFF  \\ 
 \hline
 $(0,1)$, $(0,2)$, or $(1,1)$ & Conditioned GFF  \\ 
 \hline
  $a\geq1$, $b\geq2$ and $(a,b)\neq(1,2)$ & GFF with independent Gaussian noise\\
 \hline
  $a=0$ and $b>2$, or & Conditioned GFF with independent \\ 
 $a>1$ and $b=1$ & Gaussian noise\\
 \hline
  $0\leq a \leq 1$ and $1\leq b \leq 2$ & Partially conditioned GFF  \\
 \hline
 $0\leq a<1$ and $b>2$, or & Partially conditioned GFF with \\ 
 $a>1$ and $1\leq b<2$ & independent Gaussian noise.\\
 \hline
\end{tabular}
    \caption{Summary of behaviors for the Gaussian field $\mathfrak{C}_{a,b}$.}
    \label{TableOfBehaviors}
\end{table}

\subsection{Acknowledgments}
I am grateful to my advisor Vadim Gorin for the very valuable discussions and corrections throughout all this work. The project was partially supported by NSF grant DMS -- 2246449.

\section{Proof of Theorem \ref{TheoremModel}}\label{section2}
We start by briefly describing the moment method in the random matrix theory setting which we use to prove the first part of Theorem \ref{TheoremModel}. We closely follow \cite{Bo}, where this technique is used to identify Gaussian Free Field fluctuations in nested corners of Wigner random matrices. In turn, \cite{Bo} closely follows \cite[Section 2.1]{AGZ} where a detailed account of the moment method is present.

\begin{lemma} \label{LemmaMomentMethod}
Let $r\geq 1$, $k_1,\dots,k_r$ and $n_1,\dots,n_r$ be positive integers. For each $1\leq i\leq r$ denote by $m_{n_i}$ the empirical measure of the random matrix $M_{n_i}$ and $\xi_{k_i,n_i}=\int_\R x^{k_i} \, dm_{n_i}(x)$ its (random) moments, then
\begin{equation*}
\E\bigg[\prod_{i=1}^r \xi_{k_i,n_i}\bigg]=\E\bigg[\prod_{i=1}^r\textup{tr}\Big(M_{n_i}^{k_i}\Big)\bigg].
\end{equation*}
\end{lemma}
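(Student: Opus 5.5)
The plan is to prove the identity pathwise, before any expectation is taken, and then check integrability so that both expectations are finite and equal. Here $m_{n_i}$ is the (unnormalized) empirical measure $\sum_{j=1}^{n_i}\delta_{\lambda_j^{(n_i)}}$ of the eigenvalues $\lambda_1^{(n_i)},\dots,\lambda_{n_i}^{(n_i)}$ of $M_{n_i}$. This is the reading under which the statement holds without a normalizing factor: with the $\frac{1}{n}$ normalization of Lemma \ref{TheoSemiCircleDist}, the identity would acquire a factor $\prod_i n_i^{-1}$. Under this reading
\begin{equation*}
\xi_{k_i,n_i}=\int_\R x^{k_i}\,dm_{n_i}(x)=\sum_{j=1}^{n_i}\big(\lambda_j^{(n_i)}\big)^{k_i}.
\end{equation*}

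First, each corner $M_{n_i}$ is Hermitian, since $X_{j,i}=\bar X_{i,j}$ and the diagonal entries are real. So, almost surely, it admits a decomposition $M_{n_i}=U D U^*$ with $U$ unitary and $D=\mathrm{diag}(\lambda_1^{(n_i)},\dots,\lambda_{n_i}^{(n_i)})$. Then $M_{n_i}^{k_i}=UD^{k_i}U^*$, and cyclicity of the trace gives
\begin{equation*}
\mathrm{tr}\big(M_{n_i}^{k_i}\big)=\mathrm{tr}\big(D^{k_i}\big)=\sum_j \big(\lambda_j^{(n_i)}\big)^{k_i}=\xi_{k_i,n_i}
\end{equation*}
for every realization. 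Multiplying over $i=1,\dots,r$ yields the identity $\prod_i\xi_{k_i,n_i}=\prod_i\mathrm{tr}(M_{n_i}^{k_i})$ as random variables.

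Second, I would justify that the expectations exist. Expanding $\mathrm{tr}(M_{n_i}^{k_i})=\sum X_{j_1,j_2}X_{j_2,j_3}\cdots X_{j_{k_i},j_1}$, the product $\prod_i \mathrm{tr}(M_{n_i}^{k_i})$ is a finite polynomial in the entries $X_{i,j}$ and $\bar X_{i,j}$. Each monomial is integrable by the generalized Hölder inequality, because Definition \ref{DefinitionModel} assumes $\E|X_{i,j}|^k<\infty$ for all $k$. Hence both sides are integrable, and taking expectations of the pathwise identity gives the lemma.

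There is no real obstacle here. The only points requiring care are fixing the normalization convention for $m_{n_i}$ and the integrability check, which is exactly where the finite moments assumption enters. The expansion into monomials is also the form that will be used afterwards to compute the expectations combinatorially, via paths and graphs in the manner of \cite{AGZ,Bo}.
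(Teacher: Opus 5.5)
Your proposal is correct and matches the paper's argument: the paper also gets the lemma directly from the pathwise identity $\xi_{k_i,n_i}=\textup{tr}\big(M_{n_i}^{k_i}\big)$, which holds for every realization and implicitly uses the unnormalized empirical measure $\sum_j\delta_{\lambda_j}$, exactly as you point out. Your integrability check via the finite-moment assumption and generalized H\"older is a harmless addition that the paper leaves implicit.
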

\begin{proof}
This follows immediately from the identity $\xi_{k_i,n_i}=\textup{tr}\Big(M_{n_i}^{k_i}\Big)$ for each $i=1,\dots,r$.
\end{proof}

A beautiful consequence of the lemma above is that any computation of the joint moment of spectral measures is reduced to a combinatorial problem.  

\begin{figure}[h!] 
    \centering
\includegraphics[scale=0.4,angle=90]{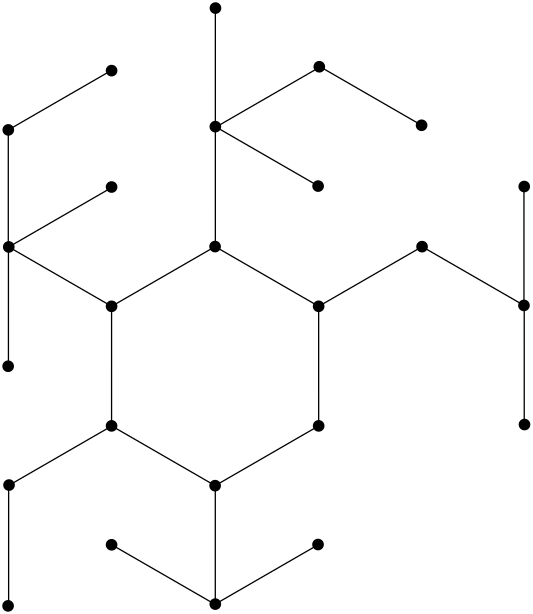}\\
\caption{Unicycle graph with 25 vertices and a cycle of length $6$.}\label{FigureUnicyclegraph}
\end{figure}

More precisely, the computation of the joint moment in Lemma \ref{LemmaMomentMethod} will be reduced to the count of a certain collection of $r$ cyclic graphs each with $k_i$ edges on the complete graphs with vertices on the set $\{1,2,\dots,n_i\}$ for $i=1,\dots,r$. In practice, the computation of the moments of $\xi_{k,n}$ is reduced to the count of trees, the computation of the covariance is reduced to the count of pairs of unicycle graphs, that is graphs containing a single cycle, or equivalently a cycle with trees attached to its vertices, see for example Figure \ref{FigureUnicyclegraph},  and the computation leading to Gaussianity consists of noticing a necessity of count of perfect matching of pairs of unicyclic graphs for the higher joint moments. 

Similarly to the proof of the semicircle law, the proof of the Gaussianity of the joint process $\big(\xi_{k_i,n_i}\big)_{i=1}^r$ follows the arguments already present in \cite{AGZ,Bo}. For this reason we focus on the computation of the covariance. Start by noticing that the covariance formula for the single-level setting, that is for $r=2$, $n=n_1=n_2$, and denoting $\xi_{k,n}=\xi_{k_1,n_1}$, $\xi_{k',n}=\xi_{k_2,n_2}$ gives 
\begin{equation*}
\begin{split}
\lim_{n \to \infty} \frac{1}{n^{\frac{k+k'}{2}}} \textup{Cov}(\xi_{k,n},\xi_{k',n})=\binom{k}{\frac{k-1}{2}}\binom{k'}{\frac{k'-1}{2}}\E[X_{1,1}^2]+2\binom{k}{\frac{k-2}{2}}\binom{k'}{\frac{k'-2}{2}}\big(\E[|X_{1,2}|^4]-1\big)\\+\sum_{i=3}^{\infty}i\binom{k}{\frac{k-i}{2}}\binom{k'}{\frac{k'-i}{2}}.
\end{split}
\end{equation*}

This formula is indeed obtained in \cite[Proof of Theorem 2']{Bo} from \cite[Equation (2.1.44)]{AGZ} by using the Catalan $k$-fold convolution \cite{Re12}, note that in \cite{AGZ} an extra factor of $2$ appears. This is because their proof is for real symmetric matrices rather than complex Hermitian matrices. In our setting we immediately have that $\E[|X_{1,1}|^2]=a$ and $\E\big[|X_{1,2}|^4\big]=b$, from which we get that
\begin{equation}\label{FormulaCovarianceUniCycles}
\begin{split}
\lim_{n \to \infty} \frac{1}{n^{\frac{k+k'}{2}}} \textup{Cov}(\xi_{k,n},\xi_{k',n})=(a-1)\binom{k}{\frac{k-1}{2}}\binom{k'}{\frac{k'-1}{2}}+2(b-2)\binom{k}{\frac{k-2}{2}}\binom{k'}{\frac{k'-2}{2}}\\+\sum_{i=1}^{\infty} i\binom{k}{\frac{k-i}{2}}\binom{k'}{\frac{k'-i}{2}}.
\end{split}
\end{equation}

Importantly each pair of binomial coefficients with index $i$ in the equation above corresponds to the number of pairs of unicycle graphs that coincide over a cycle with $i$ vertices. 

\begin{remark}\label{RemarkChebychevPolynomyals}
Starting from equation (\ref{FormulaCovarianceUniCycles}) above and using the inversion formulas for the Chebyshev polynomials of the first kind, it is a short computation to verify that the random variables $\big(\int_\R T_i(x) \, dm_{n}(x)\big)_{i}$, where $\big(T_i(x)\big)_i$ denotes the Chebyshev polynomials and $m_{n}$ denotes the empirical measure of the random matrix $M_n$, converges to a sequence of independent Gaussian random variables, see for example \cite[Proposition 3]{Bo}.
\end{remark}

We are interested in adapting formula (\ref{FormulaCovarianceUniCycles}) to the multilevel setting and recover the covariance formula for the height function for it. We follow a standard argument from \cite{Bo}.

\begin{proof}[Proof of Theorem \ref{TheoremModel}]
Denote $n_1=\lfloor t_1 n  \rfloor$ and $n_2=\lfloor t_2 n  \rfloor$ for $0\leq t_1,\,t_2\leq1 $ and let $\xi_{k,n_i}=\int_\R x^k \, dm_{n_i}(x)$ for $i=1,2$. We want to adapt the combinatorial formula for the covariance to the setting $n_1\neq n_2$. In this setting when determining the leading coefficient for the number of pairs of unicycle graphs having a cycle that coincide we do the following. 

The first graph has vertices on the set $\{1,2,\dots,n_1\}$ while the second graph has vertices on the set $\{1,2,\dots,n_2\}$. Because the vertices in the cycles must coincide, the vertices must take values on the set $\{1,\dots,\min(n_1,n_2)\}$. This will cause the appearance of the term 
\begin{equation*}
    \lim_{n\to \infty}\frac{\min(n_1,n_2)^i}{n^i}=\min(t_1,t_2)^i
\end{equation*}
for pairs of unicycles with a cycle of length $i$. In addition to the $i$ vertices from the cycles, the first graph will contain $\frac{k-i}{2}$ additional vertices taking values in the set $\{1,\dots,n_1\}$ and the second graph will contain $\frac{k'-i}{2}$ additional vertices taking values in the set $\{1,\dots,n_2\}$, each corresponds to the disjoint trees attached to the vertices of the cycle. These produce the terms 
\begin{equation*}
    \lim_{n\to \infty} \Big(\frac{n_1}{n}\Big)^{\tfrac{k-i}{2}}=t_1^{\tfrac{k-i}{2}}\,\hspace{2mm}\textup{ and }\hspace{2mm}\,\lim_{n\to \infty} \Big(\frac{n_2}{n}\Big)^{\tfrac{k'-i}{2}}=t_2^{\tfrac{k'-i}{2}}.
\end{equation*}

Putting all together, jointly with the leading coefficient for the number of equivalent pairs of unicycle graphs, we get that
\begin{equation}\label{Eq1Covariance}
\begin{split}
\lim_{n \to \infty} \frac{1}{n^{\frac{k+k'}{2}}} \textup{Cov}(\xi_{k,n_1},\xi_{k',n_2})=(a-1)\binom{k}{\frac{k-1}{2}}\binom{k'}{\frac{k'-1}{2}}\min(t_1,t_2) t_1^{\frac{k-1}{2}} t_2^{\frac{k'-1}{2}}+2(b-2) \\  \times\binom{k}{\frac{k-2}{2}} \binom{k'}{\frac{k'-2}{2}} \min(t_1,t_2)^2 t_1^{\frac{k-2}{2}} t_2^{\frac{k'-2}{2}}+\sum_{i=1}^{\infty} i\binom{k}{\frac{k-i}{2}}\binom{k'}{\frac{k'-i}{2}}\min(t_1,t_2)^i t_1^{\frac{k-i}{2}} t_2^{\frac{k'-i}{2}}.
\end{split}
\end{equation}
Using this formula we now only need to compute the covariance for the height function. Denoting the random variables
\begin{equation*}
\mathcal{M}_{k,t}:=\sqrt{\pi}\int_{-\infty}^{+\infty} x^k \big[\textup{H}(\sqrt{n}x,nt)-\E\textup{H}(\sqrt{n}x,nt)\big]\,dx,
\end{equation*}
a change of variables and integrating by parts gives that
\begin{equation*}
\begin{split}
   \mathcal{M}_{k,t}=\frac{-\sqrt{\pi}}{n^{(k+1)/2}(k+1)}\Big(\int_\R x^{k+1} \, dm_{\lfloor tn \rfloor}(x)-\E\int_\R x^{k+1} \, dm_{\lfloor tn \rfloor}(x)\Big).
\end{split}
\end{equation*}
Now a direct application of the covariance formula (\ref{Eq1Covariance}) gives that
\begin{equation*}
\begin{split}
    \lim_{n\to \infty} \textup{Cov}(\mathcal{M}_{k,t_1},\mathcal{M}_{k',t_2}) =\frac{\pi}{(k+1)(k'+1)} \Bigg( (a-1)\binom{k+1}{k/2} \binom{k'+1}{k'/2} \min(t_1,t_2)t_1^{k/2}t_2^{k'/2}\\+2(b-2)\binom{k+1}{\frac{k-1}{2}} \binom{k'+1}{\frac{k'-1}{2}} \min(t_1,t_2)^2t_1^{\frac{k-1}{2}}t_2^{\frac{k'-1}{2}} \\+\sum_{i=1}^{\infty} i\binom{k+1}{\frac{k+1-i}{2}}\binom{k'+1}{\frac{k'+1-i}{2}} \min(t_1,t_2)^i t_1^{\frac{k+1-i}{2}} t_2^{\frac{k'+1-i}{2}}\Bigg).
\end{split}
\end{equation*}

We are left to analyze the combinatorial formula at the right-hand side of the equation above. By using the binomial expansion we have

\begin{align*}
\lim_{n\to \infty} \textup{Cov}(\mathcal{M}_{k,t_1},\mathcal{M}_{k',t_2}) 
&=\frac{\pi}{(2\pi i)^2(k+1)(k'+1)}\oint\limits_{\substack{ |z|^2=t_1}}\oint\limits_{\substack{ |w|^2=t_2}} \big(z+\frac{t_1}{z}\big)^{k+1}\big(w+\frac{t_2}{w}\big)^{k'+1}\\
&\hspace{-20mm}\bigg(\frac{\min(t_1,t_2)}{t_1(\min(t_1,t_2)z/t_1-w)^2}+(a-1)\frac{\min(t_1,t_2)}{z^2w^2}+2(b-2)\frac{\min(t_1,t_2)^2}{z^3w^3}\bigg)\,dz\,dw\\
&=\frac{\pi}{(2\pi i)^2}\oint\limits_{\substack{ |z|^2=t_1}}\oint\limits_{\substack{ |w|^2=t_2}} \big(z+\frac{t_1}{z}\big)^k\big(w+\frac{t_2}{w}\big)^{k'} \bigg(\ln\Big(\frac{\min(t_1,t_2)}{t_1}z-w\Big)\\
&\hspace{5mm}+(a-1)\frac{\min(t_1,t_2)}{zw}+\frac{(b-2)\min(t_1,t_2)^2}{2z^2w^2}\bigg)\frac{dx(z)}{dz}\frac{dx(w)}{dw}\,dz\,dw\\
&=\oint\limits_{\substack{z\in \mathbb{H},\\\, |z|^2=t_1}} \oint\limits_{\substack{w\in \mathbb{H},\\\, |w|^2=t_2}} \hspace{-1mm} x(z)^k x(w)^{k'}\bigg(\frac{-1}{2\pi}\ln\Biggr| \frac{z-w}{z-\bar{w}}\Biggr|-\frac{(1-a)\min\big(|z|^2,|w|^2\big)}{\pi}\\
&\hspace{-20mm}\times\Im\Big(\frac{1}{z}\Big)\Im\Big(\frac{1}{w}\Big)-\frac{(2-b)\min\big(|z|^4,|w|^4\big)}{2\pi}\Im\Big(\frac{1}{z^2}\Big)\Im\Big(\frac{1}{w^2}\Big)\bigg) \frac{dx(z)}{dz}\frac{dx(w)}{dw}\,dz\,dw.
\end{align*}

Here we first integrated by parts and then simplified the remaining terms by partitioning the integral domain with respect to $\{z: |z|^2=t_1, \Im(z)>0\}$ and $\{w: |w|^2=t_2, \Im(w)>0\}$, and its conjugates. The last identity concludes the proof of the theorem.
\end{proof}

\section{Proof of Theorem \ref{Theorem2}}

We briefly review the meaning of conditioning in the setting of Gaussian processes as discussed in \cite{LG}. Denote by $\mathcal{H}$ the Hilbert space formed by finite linear combinations of instances of a generalized Gaussian field $\mathfrak{G}$ integrated with respect to real valued polynomials in $x(z)=z+\bar{z}=2\Re(z)$ over the curves $\mathcal{C}_t:=\{z\in \mathbb{H}:|z|^2=t\}$ with inner product given by its covariance and $K$ a closed linear subspace of $\mathcal{H}$. We say that $\mathfrak{C}$ is the conditioned generalized Gaussian field $\mathfrak{G}$ with respect to $K$ if there exists an orthogonal projection $\textup{P}:\mathcal{H}\to K$ such that $\mathfrak{C}=\mathfrak{G}-\textup{P}[\mathfrak{G}]$. It is enough to check that their covariances coincide, see \cite[Corollary 1.10]{LG} for a precise statement on how we can interpret this as a conditioning of a Gaussian process. To facilitate the reading of the proof of Theorem \ref{Theorem2}, we separate the proof in parts. We start with a proof of item $(2)$.

\begin{proof}[Proof of Theorem \ref{Theorem2}, item (2)]
We specialize the proof to the case $(a,b)=(0,1)$, the other two cases being completely analogous. Denote $\mathfrak{G}$ the Gaussian Free Field on the upper-half plane as described above and $\mathfrak{C}_{0,1}$ the one from the introduction of the paper. In our setting, we are integrating the Gaussian field over curves $\mathcal{C}_{t}$. Since we have that $\mathfrak{C}_{0,1}(1)=\mathfrak{C}_{0,1}(x)=0$ for any $t>0$, we choose $K$ to be the closed linear subspace generated by the first two modes.

Note that a basis for the space of polynomials in $x$ is the one generated by derivatives Chebyshev polynomials of the first kind denoted $T_k'(x)$. Hence each polynomial $f(x)$ of degree $N$ can be decomposed as $f(x)=\sum_{i=0}^{N}c_i T_{i+1}'(x)$. We denote $f[2]$ the truncated polynomial $f[2](x)=\sum_{i=2}^N c_i T_{i+1}'(x)$. In the rest of the proof we use the rescaled Chebyshev polynomials $T_k^{(t)}(x):=T_k\!(\tfrac{x}{2\sqrt t})$ for $t>0$, so that on $\mathcal C_t$ we have
\begin{equation*}
T_k^{(t)}(x(z))=\cos(k\theta),
\hspace{3mm} z=\sqrt t\,e^{i\theta}.
\end{equation*}
Consider the function $\textup{P}:\mathcal{H}\to K$ defined by $\textup{P}[\mathfrak{G}](f)=\mathfrak{G}\big(f-f[2]\big)$. It is clear from the definition that this is a projection and that $\textup{P}[\mathfrak{G}](1)=\mathfrak{G}(1)$ and $\textup{P}[\mathfrak{G}](x)=\mathfrak{G}(x)$. We are left to check that $\textup{P}$ is an orthogonal projection and that the covariance of $\mathfrak{G}-\textup{P}[\mathfrak{G}]$ coincides with the covariance of $\mathfrak{C}_{0,1}$. We will need to verify both of these claims. To verify that $\textup{P}$ is an orthogonal projection we want to check that $\textup{Cov}\big(\mathfrak{G}(f)-\textup{P}[\mathfrak{G}](f),\textup{P}[\mathfrak{G}](g)\big)=0$ for any $f$ and $g$ polynomials in $x$ supported in $\mathcal{C}_t$.

Equivalently, we need to check that for any $0\leq t_1,\,t_2\leq 1$, $k\geq 2$ and $k'=0,\,1$ we have
\begin{equation*}
\begin{split}
\oint\limits_{\substack{z\in \mathbb{H},\\\, |z|^2=t_1}} \oint\limits_{\substack{w\in \mathbb{H},\\\, |w|^2=t_2}} T_{k+1}'\big(x(z)\big) T_{k'+1}'\big(x(w)\big)\bigg(\frac{-1}{2\pi}\ln\Biggr| \frac{z-w}{z-\bar{w}}\Biggr|\bigg) \frac{dx(z)}{dz}\frac{dx(w)}{dw}\,dz\,dw=0.
\end{split}
\end{equation*}

We now reverse the computation used in the proof of Theorem \ref{TheoremModel}, we separate the integral depending on the sign of the imaginary part of $z$ and $w$ and integrate by parts to recover the identity
\begin{equation*}
    \frac{1}{(2\pi i)^2}\oint\limits_{\substack{ |z|^2=t_1}}\oint\limits_{\substack{ |w|^2=t_2}} T_{k+1}\big(z+\frac{t_1}{z}\big)T_{k'+1}\big(w+\frac{t_2}{w}\big)\bigg(\frac{\min(t_1,t_2)}{t_1(\min(t_1,t_2)z/t_1-w)^2}\bigg)\,dz\,dw=0,
\end{equation*}
which follows from the orthogonality of Chebyshev polynomials, see \cite[Proposition 3]{Bo}. The computation to verify that the covariances coincide is similarly done by using the orthogonality relations of Chebyshev polynomials.
\end{proof}

\begin{proof}[Proof of Theorem \ref{Theorem2}, item (3)]
It is enough to check that the covariance of $\mathfrak{C}_{a,b}$ and of $\mathfrak{C}'_{a,b}=\mathfrak{G}+\sqrt{a-1}\eta_1+\sqrt{b-2}\eta_2$ match. Since $\mathfrak{G}$, $\eta_1$ and $\eta_2$ are independent, it is a direct computation that
\begin{equation*}
\begin{split}
\textup{Cov}\big(\mathfrak{C}'_{a,b}(\sqrt{t}e^{i\theta}),\mathfrak{C}'_{a,b}(\sqrt{t'}e^{i\phi})\big)
=\frac{-1}{2\pi}\ln\Biggr|\frac{\sqrt{t}e^{i\theta}-\sqrt{t'}e^{i\phi}}
{\sqrt{t}e^{i\theta}-\sqrt{t'}e^{-i\phi}}\Biggr|
+\frac{(a-1)\min(t,t')}{\pi\sqrt{tt'}}\sin(\theta)\sin(\phi)
\\+\frac{(b-2)\min(t,t')^2}{2\pi tt'}\sin(2\theta)\sin(2\phi).
\end{split}
\end{equation*}

The change of variables $z=\sqrt{t}e^{i\theta}$ and $w=\sqrt{t'}e^{i\phi}$, jointly with the identities
\begin{equation*}
\Im\Big(\frac{1}{z}\Big)=-\frac{\sin(\theta)}{\sqrt{t}}
\qquad\textup{and}\qquad
\Im\Big(\frac{1}{z^2}\Big)=-\frac{\sin(2\theta)}{t},
\end{equation*}
and their analogues for $w$, allows us to recover the covariance kernel of $\mathfrak{C}_{a,b}$.
\end{proof}

We now combine the ideas for the proof of items (2) and (3) to conclude the proof of the theorem.

\begin{proof}[Proof of Theorem \ref{Theorem2}, items (4) \& (5)]
We fix
\begin{equation*}
    \mathfrak{C}'_{a,b}
    =
    \mathfrak{G}
    +\sqrt{\frac{a}{1-a}}\eta_1
    +\sqrt{\frac{b-1}{2-b}}\eta_2.
\end{equation*}
We will need to consider the enlarged Gaussian space generated by $\mathfrak{G}$, $\eta_1$ and $\eta_2$. We need to verify that conditioning $\mathfrak{G}$ so that $\mathfrak{C}'_{a,b}$ has first and second modes $0$ produces the covariance kernel of $\mathfrak{C}_{a,b}$. We recall the discussion in the introduction: if $X$ and $\eta$ are independent centered Gaussian random variables with the same variance and $p\geq0$, then $\textup{Var}\big(X\mid X+p\eta=0\big)=\frac{p^2}{1+p^2}\textup{Var}(X)$. 

Now denote by $K$ the closed linear subspace of the Gaussian Hilbert space generated by the first and second modes of $\mathfrak{C}'_{a,b}$. Since $\eta_1$ and $\eta_2$ are supported respectively on the first and second modes, these modes are given by
\begin{equation*}
    \mathfrak{G}_1+\sqrt{\frac{a}{1-a}}\eta_1
    \hspace{2mm}\textup{ and }\hspace{2mm}
    \mathfrak{G}_2+\sqrt{\frac{b-1}{2-b}}\eta_2,
\end{equation*}
where $\mathfrak{G}_1$ and $\mathfrak{G}_2$ correspond to the first and second modes of the GFF $\mathfrak{G}$. The conditioned field is therefore
$\mathfrak{G}-\textup{P}_K[\mathfrak{G}]$. Note that conditioning with respect to $K$ acts independently on these two modes and leaves all higher modes unchanged. Applying the Gaussian conditioning identity above, the resulting covariance of the first mode is multiplied by $a$ and the covariance of the second mode is multiplied by $b-1$. Hence we obtain
\begin{equation*}
\frac{-1}{2\pi}\ln\Biggr| \frac{z-w}{z-\bar{w}}\Biggr|
-\frac{(1-a)\min\big(|z|^2,|w|^2\big)}{\pi}\Im\Big(\frac{1}{z}\Big)\Im\Big(\frac{1}{w}\Big)-\frac{(2-b)\min\big(|z|^4,|w|^4\big)}{2\pi}\Im\Big(\frac{1}{z^2}\Big)\Im\Big(\frac{1}{w^2}\Big),
\end{equation*}
which coincides with the covariance kernel of $\mathfrak{C}_{a,b}$. The proof of item $(5)$ follows by applying the conditioning argument above to one of the first two modes and the independent Gaussian perturbation argument from item $(3)$ to the other.
\end{proof}


\begin{thebibliography}{XXX}




\bibitem[AGZ09]{AGZ} Anderson G., Guionnet A., Zeitouni O., {\it{An Introduction to Random Matrices}}, Cambridge University Press  (2011).


\bibitem[BN25]{BN25} Berggren T., Nicoletti M., {\it{Gaussian Free Field and Discrete Gaussians in Periodic Dimer Models}}, preprint (2025) \url{https://arxiv.org/pdf/2502.07241v2}.


\bibitem[Bo14]{Bo} Borodin A., {\it{CLT for spectra of submatrices of Wigner Random Matrices}}, Mosc. Math. J. {\bf{14}} (2014) 29--38.

\bibitem[BB14]{BB14} Borodin A., Bufetov A., {\it{Plancherel representations of}} $U(\infty)$ {\it{and correlated Gaussian free fields}}, Duke Math. J.{\bf{163}} (2014) 2109--2158.

\bibitem[BF14]{BF} Borodin A., Ferrari P., {\it{Anisotropic growth of random surfaces in 2+1 dimensions}}, Communications in Mathematical Physics, {\bf{325}} (2014) 603--684.

\bibitem[BG15]{BoG} Borodin A., Gorin V., {\it{General  $\beta$-Jacobi Corners Process and the Gaussian Free Field}}, Comm. Pure Appl. Math. {\bf{68}} (2015) 1774--1844.



\bibitem[BPZ25]{BPZ25} Bufetov A., Petrov L., Zografos P., {\it{Domino Tilings of the Aztec Diamond in Random Environment and Schur Generating Functions}}, preprint (2025) \url{https://arxiv.org/pdf/2507.08560}.


\bibitem[DKKMO20]{DKKMO20} Duminil--Copin H., Kozlowski K. K., Krachun D., Manolescu I., Oulamara M., {\it{Rotational invariance in critical planar lattice models}},  preprint (2020) \url{arxiv.org/abs/2012.11672}.

\bibitem[DKLM26]{DKLM26} Duminil--Copin H., Kozlowski K. K., Lammers P., Manolescu I., {\it{Gaussian free field convergence of the six-vertex model with }} $-1\leq \Delta\leq-\tfrac{1}{2}$,  preprint (2026) \url{arxiv.org/abs/2603.06268}.

\bibitem[DP18]{DP18} Dumitriu I., Paquette E., {\it{Spectra of overlapping Wishart matrices and the Gaussian free field}}, Random Matrices Theory Appl. {\bf{7}} 1850003 (2018).



\bibitem[Go21]{Gorin} Gorin V., {\it{Lectures on random lozenge tiling}}, Cambridge Univ. Press (2021).

\bibitem[GN25]{GN25} Gorin V., Nicoletti M., {\it{Six-vertex model and random matrix distributions}}, Bull. Amer. Math. Soc. {\bf{62}} (2025) 175--234.











\bibitem[Ke09]{Ke09} Kenyon R., {\it{Lectures on dimers}}, (2009) Available at  \url{arxiv.org/pdf/0910.3129}.



\bibitem[KZ23]{KZ23} Kuan J., Zhou Z., {\it{Three-dimensional Gaussian fluctuations of spectra of overlapping stochastic Wishart matrices}}, Random Matrices Theory Appl. {\bf{12}} 2250048 (2023).



\bibitem[LG16]{LG} Le Gall J-F., {\it{Brownian Motion, Martingales, and Stochastic Calculus}}, Graduate Texts in Mathematics, Springer International Publishing, {\bf{274}} (2016).

\bibitem[LRS20]{LRS20} Li L., Reed M., Soshnikov A., {\it{Central Limit Theorem for Linear Eigenvalue Statistics for Submatrices of Wigner Random Matrices}}, Front. Appl. Math. Stat. {\bf{6}} (2020) 17.












\bibitem[PW21]{PW} Powell E., Werner W., {\it{Lecture notes on the Gaussian free field}}, Cours spécialisés, Société Mathématique de France {\bf{28}} (2021).

\bibitem[Ra25]{RaMain} Raposo G., {\it{Global fluctuations for standard Young tableaux}}, preprint (2025) \url{arxiv.org/pdf/2507.18601}.

\bibitem[Re12]{Re12} Regev A., {\it{A proof of Catalan’s Convolution formula}}, Integers {\bf{12}} (2012) 929--934.




\bibitem[Sh07]{Sh07} Sheffield S., {\it{Gaussian free fields for mathematicians}}, Probab. Theory Related Fields,
{\bf{139}} (2007) 521--541.







\end{thebibliography}
\end{document}